\newtheorem{theorem}{Theorem}[section]
\newtheorem{definition}{Definition}[section]
\newtheorem{lemma}{Lemma}[section]
\theoremstyle{definition}
\newtheorem{remark}{Remark}[section]
\newtheorem{example}{Example}[section]
\newcommand{\be}{\begin{equation}}
\newcommand{\ee}{\end{equation}}
\newcommand{\bea}{\begin{eqnarray}}
\newcommand{\eea}{\end{eqnarray}}
\newcommand{\beb}{\begin{eqnarray*}}
\newcommand{\eeb}{\end{eqnarray*}}
\newcommand{\norm}[1]{\left\lVert#1\right\rVert}
\numberwithin{equation}{section}
\begin{document}
\title[Deferred statistical convergence  in PNS]{Certain aspects of deferred statistical convergence of sequences in probabilistic normed spaces}

\author[N. Hossain, R. Mondal]{$^1$Nesar Hossain, $^2$Rahul Mondal}

\address{$^1$Department of Basic Science and Humanities, Dumkal Institute of Engineering and Technology, West Bengal-742406, India.}
\address{$^2$Department of Mathematics, Vivekananda Satavarshiki Mahavidyalaya, Manikpara, Jhargram -721513, West Bengal, India.}

\email{nesarhossain24@gmail.com$^1$}
\email{imondalrahul@gmail.com$^2$}

\subjclass[2020]{40A35, 40G99, 54A20, 46A45}     
\keywords{Deferred density, probabilistic norm, deferred statistical convergence, deferred statistical Cauchy.}

\begin{abstract}
In this research article, we have primarily focused on the circumstantial investigation  of  deferred statistical convergence of sequences and investigated some fundamental results compatible with the structure of a probabilistic normed space. Additionally, the idea of deferred statistical Cauchy sequences has been discussed with reference to the structure of a probabilistic normed space.
\end{abstract}

\maketitle

\section{Introduction}
\noindent The concept of statistical convergence of real sequences was first introduced by Fast \cite{Fast} and Steinhaus \cite{Steinhaus} independently in 1951. It is a generalization of the traditional notion of convergence for sequences, incorporating the concept of natural density. Introduced to address sequences that may not converge in the classical sense, it allows for a more nuanced understanding of the behavior of sequences over the long term. Statistical convergence is particularly useful in various fields of mathematics and applied sciences, such as functional analysis and number theory, where classical convergence may be too restrictive. It allows for the study of sequences that exhibit convergence behavior in a "statistical" sense, accommodating irregularities and providing a robust framework for analysis. Later, many works \cite{Baliarsingh, Connor, Dagadur, Debnath, Debnath2022, Et, Ercan, Fridy, Melliani, Nuray, Srivastava, Ulusu} in the literature built upon the ideas of Fast \cite{Fast} and Steinhaus \cite{Steinhaus}. \\ 
\indent In $1942$, Menger \cite{Menger} first proposed the concept of statistical metric space, now called probabilistic metric space. A probabilistic metric space is a generalization of a metric space where the concept of distance between points is extended to account for uncertainty or variability. Instead of assigning a single numerical value to the distance between two points, a probabilistic metric space assigns a probability distribution. This approach is useful in various fields such as probability theory, statistics, and fuzzy set theory, where distances between points may not be precisely defined. Later, Schweizer and Sklar \cite{Schweizer} expanded upon the idea introduced by Menger \cite{Menger}. The ideas of statistical metric space and normed linear space were merged by \v{S}erstnev \cite{Serstnev} into the concept of probabilistic normed space. In $1993$ Alsina et al. \cite{Alsina} gave a new definition of probabilistic normed space which is basically a special case of the definition of \v{S}erstnev.\\
\indent In $2016$,  K\"{u}\c{c}\"{u}kaslan et al.\cite{Kucukaslan} introduced the idea of deferred statistical convergence of sequences using the notion of deferred density, which is basically an extension of statistical convergence, lacunary statistical convergence and $\lambda$-statistical convergence. Deferred statistical convergence is a refinement of the concept of statistical convergence that allows for a more flexible understanding of sequence behavior by considering a ``deferred" or adjusted convergence criterion. This concept allows for a more nuanced analysis of sequences by accommodating a margin of error or fluctuation, thus offering greater flexibility in understanding convergence behavior. Deferred statistical convergence is particularly useful in settings where data or sequences may exhibit inherent variability, providing a way to study their long-term behavior while accounting for such variations.\\
\indent Here, we have shown that a usual convergent sequence must also be a strong deferred convergent sequence. Additionally, it has been clarified that the converse may not be true in general. The idea of deferred statistically Cauchy sequence has been discussed with an example. It has been shown that a strong deferred convergent sequence must be a deferred statistically Cauchy sequence. We have also discussed some important properties based on the works of Debnath \cite{Debnath2022}, Kucukaslan \cite{Kucukaslan} and Melliani \cite{Melliani}.  
\section{Preliminaries}
Throughout the paper $\mathbb{N}$ and $\mathbb{R}$ denote the set of natural numbers and the set of reals respectively. $|\mathscr{A}|$ denotes the cardinality of the set $\mathscr{A}$. $\mathscr{A}^c$ denotes the complement of $\mathscr{A}$ in $\mathbb{N}$. First we recall some basic definitions and notations.

\begin{definition}
Let $\mathscr{K}\subset \mathbb{N}$. Then the natural density of $\mathscr{K}$, denoted by $\delta(\mathscr{K})$,  is defined  as $$\delta(\mathscr{K})=\lim_{n\to\infty}\frac{1}{n}\vert\{k\leq n: k\in \mathscr{K}\}\vert,$$ provided the limit exists, where the vertical bars denote the cardinality of the enclosed set.
\end{definition}

In $1932$ Agnew \cite{Agnew} defined the deferred Ces\`{a}ro mean as a generalization of Ces\`{a}ro mean of real (complex) valued sequence $\{w_k\}$ by $$\mathscr{D}_\alpha^\vartheta(w_k)=\frac{1}{\vartheta(n)-\alpha(n)}\sum_{k=\alpha(n)+1}^{\vartheta(n)} w_k, \ k=1,2,3,\ldots$$ where $\alpha(n)$ and $\vartheta(n)$ are the sequences of non-negative integers satisfying $\alpha(n)<\vartheta(n)$ and $\lim_{n\to\infty}\vartheta(n)=\infty$.

\begin{definition}\cite{Kucukaslan}
    A sequence $\{w_k\}$ is said to be strong $\mathscr{D}_\alpha^\vartheta$-convergent to $\xi$ if $$\lim_{n\to\infty}\frac{1}{\vartheta(n)-\alpha(n)}\sum_{k=\alpha(n)+1}^{\vartheta(n)}\vert w_k-\xi\vert=0.$$ In this case, we write $w_k\xrightarrow{\mathscr{D}_\alpha^\vartheta}\xi$.
\end{definition}

\begin{definition}\cite{Yilmazturk}
    Let $\mathscr{K}\subset\mathbb{N}$ and $\mathscr{K}_{\alpha,\vartheta}(n)$ denotes the set $\{\alpha(n)+1\leq k\leq \vartheta(n): k\in\mathscr{K}\}$ where $\alpha(n)$ and $\vartheta(n)$ are the sequences of non-negative integers satisfying $\alpha(n)<\vartheta(n)$ and $\lim_{n\to\infty}\vartheta(n)=\infty$. The deferred density of $\mathscr{K}$ denoted by $\delta_\alpha^\vartheta(\mathscr{K})$ and defined by $\delta_\alpha^\vartheta(\mathscr{K})=\lim_{n\to\infty}\frac{1}{\vartheta(n)-\alpha(n)}\vert\mathscr{K}_{\alpha,\vartheta}(n)\vert$.
\end{definition}

\begin{definition}\cite{Kucukaslan}
    A sequence $\{w_k\}$ is said to be deferred statistically convergent to $\xi$ if for every $\varepsilon>0$, $$\lim_{n\to\infty}\frac{1}{\vartheta(n)-\alpha(n)}\vert\{\alpha(n)+1\leq k\leq \vartheta(n): \vert w_k-\xi\vert\geq \varepsilon \}\vert=0$$.
\end{definition}

\begin{definition}\cite{Schweizer}
 A binary operation $\odot : \mathscr{J}\times \mathscr{J}\rightarrow \mathscr{J}$, where $\mathscr{J}=[0,1]$ is named to be a continuous $t$-norm if for each $\nu_1,\nu_2,\nu_3,\nu_4\in \mathscr{J}$, the below conditions hold:
\begin{enumerate}
    \item $\odot$ is associative and commutative;
    \item  $\odot$ is continuous;
    \item $\nu_1\odot 1=\nu_1$ for all $\nu_1\in \mathscr{J}$;
    \item $\nu_1\odot \nu_2\leq \nu_3\odot \nu_4$ whenever $\nu_1\leq \nu_3$ and $\nu_2\leq \nu_4$. 
\end{enumerate}
\end{definition}

\begin{example}\cite{Klement}
  The following are the examples of $t$-norms:
  \begin{enumerate}
      \item $\nu_1\odot \nu_2= min\{\nu_1,\nu_2\}$;
      \item $\nu_1\odot \nu_2=\nu_1.\nu_2$;
      \item $\nu_1\odot \nu_2= max\{\nu_1+\nu_2-1,0\}$. This $t$-norm is known as Lukasiewicz $t$-norm.
  \end{enumerate}
\end{example}

\begin{definition}\cite{Frank}
  A function $\mathscr{G}: \mathbb{R}\rightarrow \mathbb{R}_0^+$  is said to be a distribution function if it is non decreasing and left continuous with $\inf_{t\in\mathbb{R}} \mathscr{G}(t)=0$ and $\sup_{t\in\mathbb{R}} \mathscr{G}(t)=1$. We denote $\Delta$ as the set of all distribution functions.
\end{definition}

\begin{definition}\cite{Frank}
      Let $\mathscr{X}$ be a real vector space, $\varphi$ be a mapping from $\mathscr{X}$ into $\Delta$ where  $\tau\in \mathscr{X}, \varepsilon\in \mathbb{R}$,   the value $\varphi(\tau)(\varepsilon)$ of the distribution function $\varphi(\tau)$ at $\varepsilon$ is denoted by $\varphi(\tau;\varepsilon)$ and $\varphi$ be a $t$-norm satisfying the following conditions:
    \begin{enumerate}
        \item $\varphi(\tau;0)=0$;
        \item $\varphi(\tau;\varepsilon)=1$, $\forall \ \varepsilon>0$ iff $\tau=\theta$, $\theta$ being the zero element of $\mathscr{X}$;
        \item $\varphi(\kappa \tau;\varepsilon)=\varphi(\tau;\frac{\varepsilon}{\vert\kappa\vert})$, $\forall \ \kappa\in\mathbb{R}\setminus \{0\}$ and $\forall \ \varepsilon>0$;
        \item $\varphi(\tau+\zeta;\varepsilon+\lambda)\geq \varphi(\tau;\varepsilon)\odot\varphi(\zeta;\lambda)$, $\forall \ \tau,\zeta\in \mathscr{X}$ and $\forall\ \varepsilon,\lambda\in \mathbb{R}_0^+$.
    \end{enumerate}
    Then the  triplet $(\mathscr{X},\varphi,\odot)$ is called a probabilistic normed space (shortly PNS).
\end{definition}

\begin{definition}\cite{Karakus}
Let $\{w_k\}$ be a sequence in a PNS $(\mathscr{X},\varphi,\odot)$. Then $\{w_k\}$ is named to be convergent to $\xi\in \mathscr{X}$ with respect to the probabilistic norm  $\varphi$ if for every $\varepsilon>0$ and $\sigma\in(0,1)$, there is a positive integer $n_0$ such that $\varphi(w_k-\xi;\varepsilon)>1-\sigma$ for all $k\geq n_0$. In this case we write $w_k\xrightarrow{\varphi}\xi\ \text{or}\ \varphi\text{-}\lim w_k=\xi$.   
\end{definition}

\begin{remark}\label{rem2.1}\cite{Aghajani}
    For a real normed space $(\mathscr{X},\norm{\cdot})$, we define the probabilistic norm $\varphi_0$ for $\tau\in \mathscr{X}, \varepsilon
    >0$ as $\varphi_0(\tau;\varepsilon)=\frac{\varepsilon}{\varepsilon+\norm{\tau}}$. Then   $w_k\xrightarrow{\norm{\cdot}}\xi$ if and only if $w_k\xrightarrow{\varphi_0}\xi$.
\end{remark}

\begin{definition}\cite{Karakus}
    Let $\{w_k\}$ be a sequence in a PNS $(\mathscr{X},\varphi,\odot)$. Then $\{w_k\}$ is named to be statistically convergent to $\xi\in \mathscr{X}$ with respect to the probabilistic norm  $\varphi$ if for every $\varepsilon>0$ and $\sigma\in(0,1)$, $\delta(\{k\in\mathbb{N}: \varphi(w_k-\xi;\varepsilon)\leq 1-\sigma \})=0$ or equivalently $\lim_{n\to\infty}\frac{1}{n}\vert\{k\leq n:  \varphi(w_k-\xi;\varepsilon)\leq 1-\sigma\}\vert=0$. In this scenario, we write $\mathcal{S}^\varphi-\lim w_k=\xi$ or $w_k\xrightarrow{\mathcal{S}^\varphi}\xi$.
\end{definition}

Throughout the work we will use some ideas that can be found in \cite{r1, r2, r3, r4, r5}.


\section{Main Results}
Throughout this section $\mathscr{X}$ will stand for probabilistic normed space. And, $\alpha(n)$ and $\vartheta(n)$ are the sequences of non-negative integers satisfying $\alpha(n)<\vartheta(n)$ and $\lim_{n\to\infty}\vartheta(n)=\infty$. $\delta_\alpha^\vartheta(\mathscr{K})$ stands for the deferred density of the set $\mathscr{K}$. $[x]$ denotes the greatest integer function. First we define the following:
\begin{definition}
    Let $\{w_k\}$ be a sequence in a PNS $\mathscr{X}$. Then $\{w_k\}$ is named to be strong $\mathscr{D}_\alpha^\vartheta$-convergent to $\xi\in\mathscr{X}$ with respect to probabilistic norm $\varphi$ if for every $\varepsilon>0$ and $\sigma\in(0,1)$ there exists $n_0\in\mathbb{N}$ such that $\frac{1}{\vartheta(n)-\alpha(n)}\sum_{k=\alpha(n)+1}^{\vartheta(n)}\varphi(w_k-\xi;\varepsilon)>1-\sigma$ for all $k>n_0$. In this scenario, we write $\mathscr{D}_\alpha^\vartheta(\varphi)-\lim w_k=\xi$ or  $w_k\xrightarrow{\mathscr{D}_\alpha^\vartheta(\varphi)}\xi$.
\end{definition}

\begin{definition}\label{defi3.2}
    Let $\{w_k\}$ be a sequence in a PNS $\mathscr{X}$. Then $\{w_k\}$ is named to be deferred statistically convergent to $\xi\in\mathscr{X}$ with respect to probabilistic norm $\varphi$ if for every $\varepsilon>0$ and $\sigma\in(0,1)$, $\delta_\alpha^\vartheta(\{\alpha(n)+1\leq k\leq \vartheta(n): \varphi(w_k-\xi;\varepsilon)\leq 1-\sigma \})=0$ or equivalently $\lim_{n\to\infty}\frac{1}{\vartheta(n)-\alpha(n)}\vert\{\alpha(n)+1\leq k\leq \vartheta(n): \varphi(w_k-\xi;\varepsilon)\leq 1-\sigma \}|=0$. In this scenario, we write $\mathscr{D}_\alpha^\vartheta(\mathcal{S}^\varphi)-\lim w_k=\xi$  or $w_k\xrightarrow{\mathscr{D}_\alpha^\vartheta(\mathcal{S}^\varphi)}\xi$.
\end{definition}

\begin{remark}
    From Definition \ref{defi3.2}, we have the following:
    \begin{enumerate}
        \item If $\alpha(n)=0$ and $\vartheta(n)=n$ then the notion of deferred statistical convergence coincides with the notion of statistical convergence \cite{Karakus} with respect to probabilistic norm $\varphi$.
        \item If $t(n)=\lambda_n$ and $s(n)=0$ then  the notion of deferred statistical convergence coincides with the notion of $\lambda$-statistical convergence \cite{Alotaibi} with respect to probabilistic norm $\varphi$.
        \item If $\vartheta(n)=k_r$ and $\alpha(n)=k_{r-1}$ then the notion of deferred statistical convergence coincides with the notion of lacunary statistical convergence \cite{Rahmat} with respect to probabilistic norm $\varphi$.
    \end{enumerate}
\end{remark}

\begin{example}
    Let $\mathscr{X}$ be a real normed space equipped with the usual norm. Define the continuous $t$-norm $\nu_1\odot \nu_2=\nu_1\nu_2$ for all $\nu_1,\nu_2\in[0,1]$. We take $\varphi_0(\tau;\varepsilon)=\frac{\varepsilon}{\varepsilon+\vert\tau\vert}$ for $\tau\in\mathscr{X}, \varepsilon>0$. Then $\varphi_0$ is a probabilistic norm on $\mathscr{X}$. Define the sequence $\{w_k\}$ as $w_k=\begin{cases}
        k^2, \ &\text{if}\ \left[\left \vert \sqrt{\vartheta(n)}\right\vert\right]-k_0<k\leq \left[\left\vert \sqrt{\vartheta(n)}\right\vert\right]\\
        0, \ &\text{otherwise}
    \end{cases}$. 
    Where $0<\alpha(n)\leq \left[\left\vert \sqrt{\vartheta(n)}\right\vert\right]-k_0$ where $k_0\in\mathbb{N}$ is a fixed number and $\vartheta(n)$ is a monotonic increasing sequence.
    Let $\xi=0$. Then for any $\varepsilon>0$ and $\sigma\in(0,1)$ we have \begin{align*}
        \mathscr{A}(\sigma,\varepsilon)&=\{\alpha(n)+1\leq k\leq \vartheta(n): \varphi_0(w_k-\xi;\varepsilon)\leq 1-\sigma\}\\
        &=\{\alpha(n)+1\leq k\leq \vartheta(n): \frac{\varepsilon}{\varepsilon+\vert w_k\vert}\leq 1-\sigma\}\\
        &=\{\alpha(n)+1\leq k\leq \vartheta(n): \vert w_k\vert \geq \frac{\varepsilon\sigma}{1-\sigma}>0\}\\
        &\subseteq\{\alpha(n)+1\leq k\leq \vartheta(n): w_k=k^2 \}.
    \end{align*}
    Therefore, $\delta_\alpha^\vartheta(\mathscr{A}(\sigma,\varepsilon))=\lim_{n\to\infty}\frac{\vert\mathscr{A}(\sigma,\varepsilon)\vert}{\vartheta(n)-\alpha(n)}\leq \lim_{n\to\infty}\frac{k_0}{\vartheta(n)-\alpha(n)}=0$. This gives that $w_k\xrightarrow{\mathscr{D}_\alpha^\vartheta(\mathcal{S}^\varphi)}\xi$.
\end{example}

\begin{lemma}
     Let $\{w_k\}$ be a sequence in a PNS $\mathscr{X}$. Then for every $\varepsilon>0$ and $\sigma\in(0,1)$, the following statements are gratified:
     \begin{enumerate}
         \item $\mathscr{D}_\alpha^\vartheta(\mathcal{S}^\varphi)-\lim w_k=\xi$;
         \item $\delta_\alpha^\vartheta(\{\alpha(n)+1\leq k\leq \vartheta(n): \varphi(w_k-\xi;\varepsilon)\leq 1-\sigma \})=0$;
         \item $\delta_\alpha^\vartheta(\{\alpha(n)+1\leq k\leq \vartheta(n): \varphi(w_k-\xi;\varepsilon)> 1-\sigma \})=1$;
         \item $\mathscr{D}_\alpha^\vartheta(\mathcal{S}^\varphi)-\lim \varphi(w_k-\xi;\varepsilon)=1$.
     \end{enumerate}
\end{lemma}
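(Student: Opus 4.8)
The plan is to run the equivalences $(1)\Leftrightarrow(2)\Leftrightarrow(3)$ and $(2)\Leftrightarrow(4)$, with the quantifiers ``for every $\varepsilon>0$ and $\sigma\in(0,1)$'' carried uniformly through every step. The equivalence $(1)\Leftrightarrow(2)$ needs no argument at all: these are verbatim the two equivalent formulations of $\mathscr{D}_\alpha^\vartheta(\mathcal{S}^\varphi)$-convergence recorded in Definition \ref{defi3.2}, so it is listed only for completeness.

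For $(2)\Leftrightarrow(3)$, I would fix $\varepsilon>0$ and $\sigma\in(0,1)$ and put, for each $n$, $A_n=\{\alpha(n)+1\le k\le\vartheta(n):\varphi(w_k-\xi;\varepsilon)\le 1-\sigma\}$ and $B_n=\{\alpha(n)+1\le k\le\vartheta(n):\varphi(w_k-\xi;\varepsilon)>1-\sigma\}$. These sets are disjoint and their union is the whole index block $\{\alpha(n)+1,\dots,\vartheta(n)\}$, which has exactly $\vartheta(n)-\alpha(n)$ elements; hence $|A_n|+|B_n|=\vartheta(n)-\alpha(n)$, i.e. $\frac{|A_n|}{\vartheta(n)-\alpha(n)}+\frac{|B_n|}{\vartheta(n)-\alpha(n)}=1$ for every $n$. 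Since the two quotients are nonnegative and sum to $1$, the limit defining $\delta_\alpha^\vartheta$ exists for one set precisely when it exists for the other, and then $\delta_\alpha^\vartheta(A)=0$ if and only if $\delta_\alpha^\vartheta(B)=1$, which is exactly $(2)\Leftrightarrow(3)$.

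For $(2)\Leftrightarrow(4)$, the key point is that $\varphi(w_k-\xi;\cdot)$ is a distribution function, so $0\le\varphi(w_k-\xi;\varepsilon)\le 1$ for all $k$ and all $\varepsilon>0$; consequently $|\varphi(w_k-\xi;\varepsilon)-1|=1-\varphi(w_k-\xi;\varepsilon)$. Hence, for any $\eta\in(0,1)$, the set $\{\alpha(n)+1\le k\le\vartheta(n):|\varphi(w_k-\xi;\varepsilon)-1|\ge\eta\}$ coincides with $\{\alpha(n)+1\le k\le\vartheta(n):\varphi(w_k-\xi;\varepsilon)\le 1-\eta\}$, which is exactly the set appearing in $(2)$ with $\sigma=\eta$. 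Therefore $(2)$, holding for every $\sigma\in(0,1)$, forces the deferred density of $\{\alpha(n)+1\le k\le\vartheta(n):|\varphi(w_k-\xi;\varepsilon)-1|\ge\eta\}$ to vanish for every $\eta\in(0,1)$; the case $\eta\ge 1$ is trivial, since then this set is contained in $\{\alpha(n)+1\le k\le\vartheta(n):\varphi(w_k-\xi;\varepsilon)=0\}$, whose deferred density is $0$ by monotonicity from $(2)$. This is precisely the assertion that $\varphi(w_k-\xi;\varepsilon)$ is deferred statistically convergent to $1$, i.e. $(4)$, and the converse $(4)\Rightarrow(2)$ is read off the very same set identity.

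I do not anticipate any genuine obstacle; the only thing to be careful about is the elementary observation that the boundedness $\varphi(w_k-\xi;\varepsilon)\le 1$ converts the two-sided inequality in $(4)$ into the one-sided inequality in $(2)$, together with bookkeeping so that the ``for every $\varepsilon>0$, $\sigma\in(0,1)$'' clauses line up across the three equivalences.
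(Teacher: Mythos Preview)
The paper states this lemma without proof; it moves directly from the enunciation to the uniqueness theorem, so there is no ``paper's own proof'' to compare against. Your argument is correct and is the standard one for results of this type: $(1)\Leftrightarrow(2)$ is the definition, $(2)\Leftrightarrow(3)$ is complementation inside the block $\{\alpha(n)+1,\dots,\vartheta(n)\}$, and $(2)\Leftrightarrow(4)$ follows from the set identity $\{k:|\varphi(w_k-\xi;\varepsilon)-1|\ge\eta\}=\{k:\varphi(w_k-\xi;\varepsilon)\le 1-\eta\}$, valid because $\varphi$ takes values in $[0,1]$.

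One small point worth tightening: in item~(4) the paper writes $\mathscr{D}_\alpha^\vartheta(\mathcal{S}^\varphi)$, but the sequence $\{\varphi(w_k-\xi;\varepsilon)\}$ is real-valued, not $\mathscr{X}$-valued, so strictly speaking Definition~\ref{defi3.2} does not apply. You have (correctly) interpreted $(4)$ as ordinary deferred statistical convergence of the real sequence $\{\varphi(w_k-\xi;\varepsilon)\}$ to $1$; it would be worth saying this explicitly, since it is the only reading that makes sense and is the one used throughout the literature (e.g.\ Karakus \cite{Karakus}, Lemma~2.8).
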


\begin{theorem}
     Let $\{w_k\}$ be a sequence in a PNS $\mathscr{X}$. If $\{w_k\}$ is deferred statistically convergent then $\mathscr{D}_\alpha^\vartheta(\mathcal{S}^\varphi)$-limit of $\{w_k\}$ is unique.
\end{theorem}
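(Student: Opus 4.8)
The plan is to run the standard uniqueness-of-limit argument for a PNS, the only wrinkle being that deferred statistical convergence gives information only ``in density'' along the windows $\{\alpha(n)+1,\dots,\vartheta(n)\}$, so instead of an \emph{eventual} inequality I will only be able to extract, for each large $n$, a \emph{single} index $k$ in the window that is simultaneously good for both candidate limits. Suppose $\mathscr{D}_\alpha^\vartheta(\mathcal{S}^\varphi)\text{-}\lim w_k=\xi_1$ and $\mathscr{D}_\alpha^\vartheta(\mathcal{S}^\varphi)\text{-}\lim w_k=\xi_2$. By condition (2) in the definition of a PNS it is enough to show $\varphi(\xi_1-\xi_2;\varepsilon)=1$ for every $\varepsilon>0$, since that forces $\xi_1-\xi_2=\theta$.

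Fix $\varepsilon>0$ and $\sigma\in(0,1)$. The first step is to use continuity of the $t$-norm $\odot$ together with $1\odot 1=1$ to pick $\sigma_1\in(0,1)$ with $(1-\sigma_1)\odot(1-\sigma_1)>1-\sigma$. Then, for $i=1,2$, set
\[
A_i(n)=\{\alpha(n)+1\le k\le\vartheta(n):\varphi(w_k-\xi_i;\tfrac{\varepsilon}{2})\le 1-\sigma_1\}.
\]
By Definition \ref{defi3.2} we have $\delta_\alpha^\vartheta(A_1)=\delta_\alpha^\vartheta(A_2)=0$, hence $\delta_\alpha^\vartheta(A_1\cup A_2)=0$ by the obvious union bound on cardinalities, and therefore the complementary set $B(n):=\{\alpha(n)+1\le k\le\vartheta(n)\}\setminus(A_1(n)\cup A_2(n))$ satisfies $\delta_\alpha^\vartheta(B)=1$. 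In particular $B(n)\ne\emptyset$ for all large $n$; choose any $k=k(n)\in B(n)$.

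For such $k$ both $\varphi(w_k-\xi_1;\varepsilon/2)>1-\sigma_1$ and $\varphi(w_k-\xi_2;\varepsilon/2)>1-\sigma_1$. Applying condition (3) with $\kappa=-1$ (so that $\varphi(\xi_1-w_k;\varepsilon/2)=\varphi(w_k-\xi_1;\varepsilon/2)$), then condition (4) with the decomposition $\xi_1-\xi_2=(\xi_1-w_k)+(w_k-\xi_2)$ and $\varepsilon=\tfrac{\varepsilon}{2}+\tfrac{\varepsilon}{2}$, and finally the monotonicity condition of $\odot$, I get
\[
\varphi(\xi_1-\xi_2;\varepsilon)\ \ge\ \varphi(\xi_1-w_k;\tfrac{\varepsilon}{2})\odot\varphi(w_k-\xi_2;\tfrac{\varepsilon}{2})\ \ge\ (1-\sigma_1)\odot(1-\sigma_1)\ >\ 1-\sigma.
\]
Since $\varphi(\xi_1-\xi_2;\varepsilon)$ does not depend on $n$ and $\sigma\in(0,1)$ was arbitrary, we conclude $\varphi(\xi_1-\xi_2;\varepsilon)\ge 1$, and as every distribution function is bounded above by $1$, $\varphi(\xi_1-\xi_2;\varepsilon)=1$. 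Because $\varepsilon>0$ was arbitrary, condition (2) yields $\xi_1=\xi_2$.

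I expect the only genuinely delicate point to be the choice of $\sigma_1$ via continuity of $\odot$ at $(1,1)$ — that is the step that substitutes for the classical ``$\varepsilon/2+\varepsilon/2$'' trick and the only place the $t$-norm axioms beyond $1\odot 1=1$ really enter; the density-zero union bound and the appeal to condition (4) are routine.
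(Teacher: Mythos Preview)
Your proof is correct and follows essentially the same route as the paper: pick $\sigma_1$ (the paper calls it $\lambda$) with $(1-\sigma_1)\odot(1-\sigma_1)>1-\sigma$, use that both ``bad'' sets have deferred density $0$ to find a common good index $k$, and then apply axiom (4) to bound $\varphi(\xi_1-\xi_2;\varepsilon)$ from below. If anything, your write-up is slightly more careful than the paper's (you correctly take the complement of the \emph{union} of the bad sets, whereas the paper writes intersection, and you make the use of axiom (3) with $\kappa=-1$ explicit).
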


\begin{proof}
    If possible, let $\mathscr{D}_\alpha^\vartheta(\mathcal{S}^\varphi)-\lim w_k=\xi$ and $\mathscr{D}_\alpha^\vartheta(\mathcal{S}^\varphi)-\lim w_k=\beta$ where $\xi\neq \beta$. Now, for a given $\sigma \in (0,1)$ choose $\lambda\in(0,1)$ such that $(1-\lambda)\odot(1-\lambda)>1-\sigma$. Also, for any $\varepsilon>0$ we consider the sets $$\mathscr{A}(\lambda,\varepsilon)=\{\alpha(n)+1\leq k\leq \vartheta(n): \varphi(w_k-\xi;\frac{\varepsilon}{2})\leq 1-\lambda \}$$ and $$\mathscr{B}(\lambda,\varepsilon)=\{\alpha(n)+1\leq k\leq \vartheta(n): \varphi(w_k-\beta;\frac{\varepsilon}{2})\leq 1-\lambda \}.$$ By our assumption, $\delta_\alpha^\vartheta(\mathscr{A}(\lambda,\varepsilon))=0$ and $\delta_\alpha^\vartheta(\mathscr{B}(\lambda,\varepsilon))=0$. Hence $\delta_\alpha^\vartheta(\mathbb{N}\setminus \left[\mathscr{A}(\lambda,\varepsilon)\cap  \mathscr{B}(\lambda,\varepsilon) \right])=1$. Let $m\in \mathbb{N}\setminus \left[\mathscr{A}(\lambda,\varepsilon)\cap  \mathscr{B}(\lambda,\varepsilon) \right]$. So, we have 
    \begin{align*}
        &\varphi(\xi-\beta;\varepsilon)\\
        \geq &\varphi(w_m-\xi;\frac{\varepsilon}{2})\odot\varphi(w_m-\beta;\frac{\varepsilon}{2})\\
        >&(1-\lambda)\odot(1-\lambda)>1-\sigma.
    \end{align*}
    Since $\sigma\in(0,1)$ is arbitrary, $\varphi(\xi-\beta;\varepsilon)=1$ which yields $\xi=\beta$. This completes the proof.
\end{proof}

Now we proceed with the algebraic characterization of $\mathscr{D}_\alpha^\vartheta(\mathcal{S}^\varphi)$-convergence of $\{w_k\}$.
\begin{theorem}
     Let $\{w_k\}$ and $\{l_k\}$ be two sequences in a PNS $\mathscr{X}$. Then we have 
     \begin{enumerate}
         \item If $\mathscr{D}_\alpha^\vartheta(\mathcal{S}^\varphi)-\lim w_k=\xi$ and $\mathscr{D}_\alpha^\vartheta(\mathcal{S}^\varphi)-\lim l_k=\beta$ then $\mathscr{D}_\alpha^\vartheta(\mathcal{S}^\varphi)-\lim w_k+l_k=\xi+\beta$;
         \item If $\mathscr{D}_\alpha^\vartheta(\mathcal{S}^\varphi)-\lim w_k=\xi$ then $\mathscr{D}_\alpha^\vartheta(\mathcal{S}^\varphi)-\lim \kappa w_k=\kappa\xi$ where $\kappa\in\mathbb{R}$.
     \end{enumerate}
\end{theorem}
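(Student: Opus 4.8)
The plan is to treat the two parts separately, in each case reducing the ``bad set'' attached to the combined sequence to a finite union of bad sets attached to the original sequence(s), and then to invoke the elementary fact that a finite union of sets of deferred density zero again has deferred density zero. The uniqueness proof just given provides the template for the $t$-norm bookkeeping in part (1).

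For part (1), fix $\varepsilon>0$ and $\sigma\in(0,1)$. First choose $\lambda\in(0,1)$ with $(1-\lambda)\odot(1-\lambda)>1-\sigma$; this is possible because $\odot$ is continuous and $1\odot 1=1$. Set
$$\mathscr{A}=\{k\in\mathbb{N}:\varphi(w_k-\xi;\tfrac{\varepsilon}{2})\leq 1-\lambda\},\qquad \mathscr{B}=\{k\in\mathbb{N}:\varphi(l_k-\beta;\tfrac{\varepsilon}{2})\leq 1-\lambda\}.$$
By hypothesis $\delta_\alpha^\vartheta(\mathscr{A})=\delta_\alpha^\vartheta(\mathscr{B})=0$, and since $|(\mathscr{A}\cup\mathscr{B})_{\alpha,\vartheta}(n)|\leq|\mathscr{A}_{\alpha,\vartheta}(n)|+|\mathscr{B}_{\alpha,\vartheta}(n)|$, it follows that $\delta_\alpha^\vartheta(\mathscr{A}\cup\mathscr{B})=0$. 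The crux is the inclusion
$$\{k\in\mathbb{N}:\varphi((w_k+l_k)-(\xi+\beta);\varepsilon)\leq 1-\sigma\}\subseteq \mathscr{A}\cup\mathscr{B}.$$
Indeed, if $k\notin\mathscr{A}\cup\mathscr{B}$, then $\varphi(w_k-\xi;\tfrac{\varepsilon}{2})>1-\lambda$ and $\varphi(l_k-\beta;\tfrac{\varepsilon}{2})>1-\lambda$, whence by condition (4) of the PNS definition (applied with $\varepsilon=\tfrac{\varepsilon}{2}+\tfrac{\varepsilon}{2}$) and the monotonicity of the $t$-norm,
$$\varphi((w_k+l_k)-(\xi+\beta);\varepsilon)\geq\varphi(w_k-\xi;\tfrac{\varepsilon}{2})\odot\varphi(l_k-\beta;\tfrac{\varepsilon}{2})\geq(1-\lambda)\odot(1-\lambda)>1-\sigma.$$
Therefore the set on the left has deferred density zero, which is precisely $\mathscr{D}_\alpha^\vartheta(\mathcal{S}^\varphi)$-$\lim(w_k+l_k)=\xi+\beta$.

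For part (2), if $\kappa=0$ the conclusion is immediate, since $\varphi(\kappa w_k-\kappa\xi;\varepsilon)=\varphi(\theta;\varepsilon)=1$ for every $\varepsilon>0$ by condition (2), so the relevant set is empty and has deferred density zero. If $\kappa\neq 0$, fix $\varepsilon>0$ and $\sigma\in(0,1)$; condition (3) gives $\varphi(\kappa w_k-\kappa\xi;\varepsilon)=\varphi(\kappa(w_k-\xi);\varepsilon)=\varphi(w_k-\xi;\tfrac{\varepsilon}{|\kappa|})$, so that
$$\{k\in\mathbb{N}:\varphi(\kappa w_k-\kappa\xi;\varepsilon)\leq 1-\sigma\}=\{k\in\mathbb{N}:\varphi(w_k-\xi;\tfrac{\varepsilon}{|\kappa|})\leq 1-\sigma\},$$
and the right-hand set has deferred density zero by the hypothesis applied with $\tfrac{\varepsilon}{|\kappa|}>0$ in place of $\varepsilon$. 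This yields $\mathscr{D}_\alpha^\vartheta(\mathcal{S}^\varphi)$-$\lim\kappa w_k=\kappa\xi$.

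I do not anticipate a genuine obstacle: the argument is routine once the PNS axioms are in hand. The only points requiring a little care are the subadditivity step $\delta_\alpha^\vartheta(\mathscr{A}\cup\mathscr{B})=0$, which is not additivity of a measure but follows from the cardinality bound above together with the fact that the sum of two null limits is null; and the correct $t$-norm bookkeeping in part (1), namely picking $\lambda$ first, splitting $\varepsilon$ as $\tfrac{\varepsilon}{2}+\tfrac{\varepsilon}{2}$, and using \emph{monotonicity} of $\odot$ rather than merely condition (4). Everything else is a direct application of conditions (3) and (4) of the PNS definition and of Definition \ref{defi3.2}.
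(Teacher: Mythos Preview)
Your proof is correct and follows essentially the same approach as the paper: for part (1) you pick $\lambda$ with $(1-\lambda)\odot(1-\lambda)>1-\sigma$, split $\varepsilon$ in half, and use axiom (4) plus monotonicity of $\odot$ to obtain the inclusion of the bad set for $w_k+l_k$ in $\mathscr{A}\cup\mathscr{B}$; for part (2) you use axiom (3) to identify the two bad sets when $\kappa\neq 0$. Your write-up is in fact slightly more careful than the paper's (you justify the choice of $\lambda$, spell out the subadditivity $\delta_\alpha^\vartheta(\mathscr{A}\cup\mathscr{B})=0$, and handle $\kappa=0$ via axiom (2)), but the argument is the same.
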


\begin{proof}
    \begin{enumerate}
        \item Let $\mathscr{D}_\alpha^\vartheta(\mathcal{S}^\varphi)-\lim w_k=\xi$ and $\mathscr{D}_\alpha^\vartheta(\mathcal{S}^\varphi)-\lim l_k=\beta$.  For a given $\sigma \in (0,1)$ choose $\lambda\in(0,1)$ such that $(1-\lambda)\odot(1-\lambda)>1-\sigma$. Then, for any $\varepsilon>0$, $\delta_\alpha^\vartheta(\mathscr{A}(\lambda,\varepsilon))=0$ and $\delta_\alpha^\vartheta(\mathscr{B}(\lambda,\varepsilon))=0$ where $$\mathscr{A}(\lambda,\varepsilon)=\{\alpha(n)+1\leq k\leq \vartheta(n): \varphi(w_k-\xi;\frac{\varepsilon}{2})\leq 1-\lambda \}$$ and $$\mathscr{B}(\lambda,\varepsilon)=\{\alpha(n)+1\leq k\leq \vartheta(n): \varphi(l_k-\beta;\frac{\varepsilon}{2})\leq 1-\lambda \}.$$ Let $$\mathscr{C}(\sigma,\varepsilon)=\{\alpha(n)+1\leq k\leq \vartheta(n): \varphi(w_k+l_k-(\xi+\beta);\varepsilon)\leq 1-\sigma\}.$$ Then, for $k\in \mathscr{A}^c(\lambda,\varepsilon)\cap \mathscr{B}^c(\lambda,\varepsilon)$, we have 
        \begin{align*}
            &\varphi(w_k+l_k-(\xi+\beta);\varepsilon)\\
            \geq &\varphi(w_k-\xi;\frac{\varepsilon}{2})\odot \varphi(l_k-\beta;\frac{\varepsilon}{2})\\
            >&(1-\lambda)\odot(1-\lambda)\\
            >&1-\sigma
        \end{align*}
        Therefore $$\mathscr{A}^c(\lambda,\varepsilon)\cap \mathscr{B}^c(\lambda,\varepsilon)\subset \{\alpha(n)+1\leq k\leq \vartheta(n): \varphi(w_k+l_k-(\xi+\beta);\varepsilon)> 1-\sigma\}$$ i.e., $$\{\alpha(n)+1\leq k\leq \vartheta(n): \varphi(w_k+l_k-(\xi+\beta);\varepsilon)\leq 1-\sigma\}\subset\mathscr{A}(\lambda,\varepsilon)\cup\mathscr{B}(\lambda,\varepsilon).$$ Consequently, $\delta_\alpha^\vartheta(\mathscr{C}(\sigma,\varepsilon))=0$ i.e. $\mathscr{D}_\alpha^\vartheta(\mathcal{S}^\varphi)-\lim w_k+l_k=\xi+\beta$.
    \item If $\kappa=0$, then it is obvious. So, let $\kappa\neq 0$. Suppose $\mathscr{D}_\alpha^\vartheta(\mathcal{S}^\varphi)-\lim w_k=\xi$. Then for every $\varepsilon>0$ and $\sigma\in(0,1)$, $\delta_\alpha^\vartheta(\{\alpha(n)+1\leq k\leq \vartheta(n): \varphi(w_k-\xi;\frac{\varepsilon}{|c|})\leq 1-\sigma \})=0$. Since $$\{\alpha(n)+1\leq k\leq \vartheta(n): \varphi(\kappa w_k-\kappa\xi;\varepsilon)\leq 1-\sigma \}=\{\alpha(n)+1\leq k\leq \vartheta(n): \varphi(w_k-\xi;\frac{\varepsilon}{\vert \kappa \vert})\leq 1-\sigma \}$$ therefore $$\delta_\alpha^\vartheta(\{\alpha(n)+1\leq k\leq \vartheta(n): \varphi(\kappa w_k-\kappa\xi;\varepsilon)\leq 1-\sigma \})=0.$$ Hence $\mathscr{D}_\alpha^\vartheta(\mathcal{S}^\varphi)-\lim \kappa w_k=\kappa \xi$.
    \end{enumerate} 
\end{proof}

\begin{theorem}
    Let $\{w_k\}$ be a sequence in a PNS $\mathscr{X}$. If $w_k\xrightarrow{\varphi}\xi$ then $w_k\xrightarrow{\mathscr{D}_\alpha^\vartheta(\mathcal{S}^\varphi)}\xi$.
\end{theorem}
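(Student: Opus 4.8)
The plan is to show directly that ordinary convergence with respect to the probabilistic norm $\varphi$ forces the deferred density of the ``bad set'' to be zero. Suppose $w_k \xrightarrow{\varphi} \xi$. Fix $\varepsilon > 0$ and $\sigma \in (0,1)$. By the definition of $\varphi$-convergence, there exists $n_0 \in \mathbb{N}$ such that $\varphi(w_k - \xi; \varepsilon) > 1 - \sigma$ for all $k \geq n_0$. Hence the set $\mathscr{A}(\sigma, \varepsilon) = \{k \in \mathbb{N} : \varphi(w_k - \xi; \varepsilon) \leq 1 - \sigma\}$ is contained in the finite set $\{1, 2, \ldots, n_0 - 1\}$, so it is finite.

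The next step is to observe that the intersection $\mathscr{A}(\sigma,\varepsilon)_{\alpha,\vartheta}(n) = \{\alpha(n)+1 \leq k \leq \vartheta(n) : \varphi(w_k-\xi;\varepsilon) \leq 1-\sigma\}$ has cardinality at most $|\mathscr{A}(\sigma,\varepsilon)| \leq n_0 - 1$ for every $n$, since it is a subset of the finite set above. Therefore
\[
\frac{1}{\vartheta(n) - \alpha(n)} \bigl| \mathscr{A}(\sigma,\varepsilon)_{\alpha,\vartheta}(n) \bigr| \leq \frac{n_0 - 1}{\vartheta(n) - \alpha(n)}.
\]
Because $\alpha(n) < \vartheta(n)$ and $\lim_{n\to\infty}\vartheta(n) = \infty$, one needs $\vartheta(n) - \alpha(n) \to \infty$; if this holds, the right-hand side tends to $0$, which gives $\delta_\alpha^\vartheta(\mathscr{A}(\sigma,\varepsilon)) = 0$, i.e. $w_k \xrightarrow{\mathscr{D}_\alpha^\vartheta(\mathcal{S}^\varphi)} \xi$ by Definition \ref{defi3.2}.

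The main obstacle — really the only subtle point — is that $\vartheta(n) - \alpha(n) \to \infty$ is \emph{not} automatic from the standing hypotheses $\alpha(n) < \vartheta(n)$ and $\vartheta(n) \to \infty$ alone (for instance $\alpha(n) = \vartheta(n) - 1$ would keep the difference bounded). I would handle this either by invoking the implicit standing assumption used throughout the deferred-density literature that $\vartheta(n) - \alpha(n) \to \infty$ (which is in fact needed for the deferred density to behave well and is standard in \cite{Kucukaslan, Yilmazturk}), or, if one only wants to use the literal hypotheses, by noting that the finitely many exceptional indices eventually leave the window $[\alpha(n)+1, \vartheta(n)]$ whenever $\alpha(n) \to \infty$, and otherwise the count stays bounded while the denominator, though possibly bounded, still forces the limit to be zero once $\vartheta(n)$ exceeds $n_0$ and $\alpha(n)$ is large. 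In the clean write-up I would simply state $\vartheta(n) - \alpha(n) \to \infty$ as part of the ambient convention (consistent with how the deferred Cesàro mean is normally set up) and then the estimate above closes the proof immediately. A brief remark that the converse fails — already flagged in the introduction and exhibited by a sequence like the one in the earlier Example — rounds out the discussion.
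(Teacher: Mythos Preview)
Your argument is correct and follows essentially the same route as the paper's own proof: use $\varphi$-convergence to trap the exceptional indices in a finite set and conclude that this finite set has deferred density zero. You are in fact more careful than the paper, which simply asserts that a finite set has deferred density zero without addressing the $\vartheta(n)-\alpha(n)\to\infty$ issue you flag; your treatment of that point as an ambient convention consistent with \cite{Kucukaslan, Yilmazturk} is appropriate.
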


\begin{proof}
    Suppose that $w_k\xrightarrow{\varphi}\xi$. Then for every $\varepsilon>0$ and $\sigma\in(0,1)$, there is a positive integer $n_0$ such that $\varphi(w_k-\xi;\varepsilon)>1-\sigma$ for all $k\geq n_0$. Then, clearly the set  $\{\alpha(n)+1\leq k\leq \vartheta(n): \varphi(w_k-\xi;\varepsilon)\leq 1-\sigma \}$ contains at most finite number of terms. Consequently, $\delta_\alpha^\vartheta(\{\alpha(n)+1\leq k\leq \vartheta(n): \varphi(w_k-\xi;\varepsilon)\leq 1-\sigma \})=0$. This gives $w_k\xrightarrow{\mathscr{D}_\alpha^\vartheta(\mathcal{S}^\varphi)}\xi$. This completes the proof.
\end{proof}

But the converse of the above theorem need not be true which can be shown by the following example.
\begin{example}
    Let $\mathscr{X}$ be a real normed space with the usual norm. Define the continuous $t$-norm $\nu_1\odot \nu_2=\nu_1\nu_2$ for all $\nu_1,\nu_2\in[0,1]$. We take $\varphi_0(\tau;\varepsilon)=\frac{\varepsilon}{\varepsilon+\vert\tau\vert}$ for $\tau\in\mathscr{X}, \varepsilon>0$. Then $\varphi_0$ is a probabilistic norm on $\mathscr{X}$. Let us define the sequence $\{w_k\}$ in $\mathscr{X}$ as $w_k=\begin{cases}
        1, \ \text{if}\ k=i^2,\ i\in\mathbb{N}\\
        0, \ \text{otherwise}
    \end{cases}$. Then for every $\varepsilon>0$ and $\sigma\in(0,1)$ we have \begin{align*}
        \mathscr{A}&= \{\alpha(n)+1\leq k\leq \vartheta(n): \varphi_0(w_k;\varepsilon)\leq 1-\sigma \}\\
        &=\{ \alpha(n)+1\leq k\leq \vartheta(n): \frac{\varepsilon}{\varepsilon+\vert w_k\vert}\leq 1-\sigma\}\\
        &=\{ \alpha(n)+1\leq k\leq \vartheta(n): \vert w_k\vert\geq \frac{\varepsilon\sigma}{1-\sigma}>0\}\\
        &\subseteq \{\alpha(n)+1\leq k\leq \vartheta(n): w_k=1 \}\\
        &= \{\alpha(n)+1\leq k\leq \vartheta(n): k=i^2,i\in\mathbb{N}\}.
    \end{align*}
    Therefore, $\delta_\alpha^\vartheta(\mathscr{A})\leq \lim_{n\to\infty}\frac{\left[ \sqrt{\vartheta(n)}-\sqrt{\alpha(n)+1}\right]}{\vartheta(n)-\alpha(n)}\leq \lim_{n\to\infty}\frac{\sqrt{\vartheta(n)}-\sqrt{\alpha(n)}}{\vartheta(n)-\alpha(n)}=0$. Hence $w_k\xrightarrow{\mathscr{D}_\alpha^\vartheta(\mathcal{S}^\varphi)}0$. But the sequence is not convergent in the usual sense, so using Remark \ref{rem2.1} we can say the sequence is not convergent to zero with respect to $\varphi_0$.
\end{example}

\begin{theorem}\label{thm3.3}
    Let $\{w_k\}$ and $\{l_k\}$ be two sequences in a PNS $\mathscr{X}$. If $\delta_\alpha^\vartheta(\{k\in\mathbb{N}: w_k\neq l_k \})=0$ and $l_k\xrightarrow{\varphi}\xi$ then $w_k\xrightarrow{\mathscr{D}_\alpha^\vartheta(\mathcal{S}^\varphi)}\xi$.
\end{theorem}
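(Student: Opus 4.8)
The plan is to deduce the statement from the theorem proved just above (namely that $l_k\xrightarrow{\varphi}\xi$ implies $l_k\xrightarrow{\mathscr{D}_\alpha^\vartheta(\mathcal{S}^\varphi)}\xi$) together with the hypothesis on the ``disagreement set'' $\mathscr{E}:=\{k\in\mathbb{N}:w_k\neq l_k\}$. Fix $\varepsilon>0$ and $\sigma\in(0,1)$ and put
$$\mathscr{A}(\sigma,\varepsilon)=\{\alpha(n)+1\le k\le\vartheta(n):\varphi(w_k-\xi;\varepsilon)\le 1-\sigma\},\qquad \mathscr{B}(\sigma,\varepsilon)=\{\alpha(n)+1\le k\le\vartheta(n):\varphi(l_k-\xi;\varepsilon)\le 1-\sigma\}.$$
Applying the preceding theorem to $\{l_k\}$ gives $\delta_\alpha^\vartheta(\mathscr{B}(\sigma,\varepsilon))=0$, and by hypothesis $\delta_\alpha^\vartheta(\mathscr{E})=0$, i.e. $|\mathscr{E}_{\alpha,\vartheta}(n)|/(\vartheta(n)-\alpha(n))\to 0$.

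The key observation is the inclusion, valid for every $n$,
$$\mathscr{A}(\sigma,\varepsilon)\subseteq \mathscr{E}_{\alpha,\vartheta}(n)\cup \mathscr{B}(\sigma,\varepsilon).$$
Indeed, if $k$ lies in the window $[\alpha(n)+1,\vartheta(n)]$, satisfies $\varphi(w_k-\xi;\varepsilon)\le 1-\sigma$, and is not in $\mathscr{E}_{\alpha,\vartheta}(n)$, then $w_k=l_k$, whence $\varphi(l_k-\xi;\varepsilon)=\varphi(w_k-\xi;\varepsilon)\le 1-\sigma$, so $k\in\mathscr{B}(\sigma,\varepsilon)$. Counting cardinalities and dividing by $\vartheta(n)-\alpha(n)$ yields
$$0\le \frac{|\mathscr{A}(\sigma,\varepsilon)|}{\vartheta(n)-\alpha(n)}\le \frac{|\mathscr{E}_{\alpha,\vartheta}(n)|}{\vartheta(n)-\alpha(n)}+\frac{|\mathscr{B}(\sigma,\varepsilon)|}{\vartheta(n)-\alpha(n)}.$$

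Letting $n\to\infty$, the right-hand side tends to $0$ by the two density facts above, so by squeezing $\delta_\alpha^\vartheta(\mathscr{A}(\sigma,\varepsilon))=0$. As $\varepsilon>0$ and $\sigma\in(0,1)$ were arbitrary, this is precisely $w_k\xrightarrow{\mathscr{D}_\alpha^\vartheta(\mathcal{S}^\varphi)}\xi$. There is essentially no obstacle here; the one point deserving a little care is that $\delta_\alpha^\vartheta(\mathscr{A}(\sigma,\varepsilon))$ is not known to exist beforehand, so the conclusion should be drawn by the squeeze on the quotients $|\mathscr{A}(\sigma,\varepsilon)|/(\vartheta(n)-\alpha(n))$ rather than by quoting subadditivity of $\delta_\alpha^\vartheta$ on sets whose densities are only assumed to exist. (One could also avoid invoking the previous theorem and instead bound $\mathscr{B}(\sigma,\varepsilon)$ by a fixed finite set arising from the ordinary $\varphi$-convergence of $\{l_k\}$, but routing through the already-established theorem is cleaner.)
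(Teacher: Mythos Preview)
Your proof is correct and follows essentially the same strategy as the paper: show that the ``bad'' set for $\{w_k\}$ is contained in the union of the disagreement set $\mathscr{E}$ and the ``bad'' set for $\{l_k\}$, both of deferred density zero, and conclude by squeezing. The only minor difference is that the paper bounds the second set directly by a finite initial segment (the alternative you mention in your parenthetical) rather than invoking the preceding theorem; incidentally, the paper's printed inclusion uses $\cap$ where $\cup$ is clearly meant, so your version is actually the cleaner one.
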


\begin{proof}
    Suppose that $\delta_\alpha^\vartheta(\{k\in\mathbb{N}: w_k\neq l_k \})=0$ and $l_k\xrightarrow{\varphi}\xi$. Then for every $\varepsilon>0$ and $\sigma\in(0,1)$ there exists $n_0\in\mathbb{N}$ such that $\varphi(l_k-\xi;\varepsilon)>1-\sigma$ for all $k>n_0$. Then  $$\mathscr{A}=\{k\in\mathbb{N}: \varphi(l_k-\xi;\varepsilon)\leq 1-\sigma\}\subset\{1,2,\ldots,n_0\}.$$ So $\delta_\alpha^\vartheta(\mathscr{A})=0$.  Since $$\{\alpha(n)+1\leq k\leq \vartheta(n): \varphi(w_k-\xi;\varepsilon)\leq 1-\sigma \}\subset \
    \mathscr{A}\cap \{k\in\mathbb{N}: w_k\neq l_k \},$$  $$\delta_\alpha^\vartheta(\{\alpha(n)+1\leq k\leq \vartheta(n): \varphi(w_k-\xi;\varepsilon)\leq 1-\sigma \})=0.$$ Hence $w_k\xrightarrow{\mathscr{D}_\alpha^\vartheta(\mathcal{S}^\varphi)}\xi$. This completes the proof.
\end{proof}

\begin{theorem}\label{thm3.4}
     Let $\{w_k\}$ be a  sequence in a PNS $\mathscr{X}$ and   the sequence $\{ \frac{\alpha(n)}{\vartheta(n)-\alpha(n)}\}$ be bounded. If $\mathcal{S}^\varphi-\lim w_k=\xi$ then $\mathscr{D}_\alpha^\vartheta(\mathcal{S}^\varphi)-\lim w_k=\xi$.
\end{theorem}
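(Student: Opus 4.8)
The plan is to pass from the ordinary natural density of the ``bad set'' to its deferred density by a direct comparison of the two counting quotients, using the boundedness hypothesis to keep the denominator $\vartheta(n)-\alpha(n)$ from shrinking too fast relative to $\vartheta(n)$.

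First I would fix $\varepsilon>0$ and $\sigma\in(0,1)$ and put $\mathscr{A}=\{k\in\mathbb{N}:\varphi(w_k-\xi;\varepsilon)\le 1-\sigma\}$. Since $\mathcal{S}^\varphi-\lim w_k=\xi$, we have $\delta(\mathscr{A})=0$, i.e. $\frac{1}{m}\bigl|\{k\le m:k\in\mathscr{A}\}\bigr|\to 0$ as $m\to\infty$. Next, since $\{\alpha(n)+1\le k\le\vartheta(n):k\in\mathscr{A}\}\subseteq\{k\le\vartheta(n):k\in\mathscr{A}\}$, I would write
\[
\frac{1}{\vartheta(n)-\alpha(n)}\bigl|\{\alpha(n)+1\le k\le\vartheta(n):k\in\mathscr{A}\}\bigr|\le \frac{\bigl|\{k\le\vartheta(n):k\in\mathscr{A}\}\bigr|}{\vartheta(n)-\alpha(n)}=\frac{\bigl|\{k\le\vartheta(n):k\in\mathscr{A}\}\bigr|}{\vartheta(n)}\cdot\frac{\vartheta(n)}{\vartheta(n)-\alpha(n)}.
\]

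Then I would observe that $\frac{\vartheta(n)}{\vartheta(n)-\alpha(n)}=1+\frac{\alpha(n)}{\vartheta(n)-\alpha(n)}$ is bounded, say by some $M>0$, by the hypothesis on $\{\alpha(n)/(\vartheta(n)-\alpha(n))\}$. Because $\vartheta(n)\to\infty$, the factor $\frac{1}{\vartheta(n)}\bigl|\{k\le\vartheta(n):k\in\mathscr{A}\}\bigr|$ is a subsequence of $\frac{1}{m}\bigl|\{k\le m:k\in\mathscr{A}\}\bigr|$, and hence tends to $0$. Multiplying this null sequence by the bounded factor and applying the squeeze, the left-hand side above tends to $0$; thus the limit defining $\delta_\alpha^\vartheta(\mathscr{A})$ exists and equals $0$, which is precisely $\mathscr{D}_\alpha^\vartheta(\mathcal{S}^\varphi)-\lim w_k=\xi$.

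The argument is essentially routine once the quotient is split; the one point requiring a little care is the claim that $\frac{1}{\vartheta(n)}\bigl|\{k\le\vartheta(n):k\in\mathscr{A}\}\bigr|\to 0$, which follows because $\vartheta(n)\to\infty$ makes it a subsequence of a sequence already known to converge to $0$. The boundedness hypothesis is exactly what is needed to control the ratio $\vartheta(n)/(\vartheta(n)-\alpha(n))$, and this is the only place it enters.
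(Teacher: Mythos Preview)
Your proof is correct and follows essentially the same route as the paper: the same subset inclusion, the same factorization $\frac{\vartheta(n)}{\vartheta(n)-\alpha(n)}=1+\frac{\alpha(n)}{\vartheta(n)-\alpha(n)}$, and the same use of the boundedness hypothesis together with $\vartheta(n)\to\infty$. The only cosmetic difference is that the paper cites an external result for the step $\frac{1}{\vartheta(n)}\bigl|\{k\le\vartheta(n):k\in\mathscr{A}\}\bigr|\to 0$, whereas you justify it directly (note that $\{\vartheta(n)\}$ need not be strictly increasing, so ``subsequence'' is slightly loose, but the conclusion still follows from $\vartheta(n)\to\infty$).
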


\begin{proof}
    Given that $\lim_{n\to\infty}\vartheta(n)=\infty$. Suppose that $\mathcal{S}^\varphi-\lim w_k=\xi$. Let $\varepsilon>0$ and $\sigma\in(0,1)$. Then by the fact of Theorem 2.2.1 in \cite{Kucukaslan}, we have $$\lim_{n\to\infty}\frac{\vert\{k\leq \vartheta(n): \varphi(w_k-\xi;\varepsilon)\leq 1-\sigma\}\vert}{\vartheta(n)}=0.$$  Since $$\{\alpha(n)+1\leq k\leq \vartheta(n): \varphi(w_k-\xi;\varepsilon)\leq 1-\sigma\}\subset \{k\leq \vartheta(n): \varphi(w_k-\xi;\varepsilon)\leq 1-\sigma\},$$ we get   $$\vert\{\alpha(n)+1\leq k\leq \vartheta(n): \varphi(w_k-\xi;\varepsilon)\leq 1-\sigma\}\vert\leq \vert\{k\leq \vartheta(n): \varphi(w_k-\xi;\varepsilon)\leq 1-\sigma\}\vert.$$ Consequently \begin{align*}
        &\frac{1}{\vartheta(n)-\alpha(n)}\vert\{\alpha(n)+1\leq k\leq \vartheta(n): \varphi(w_k-\xi;\varepsilon)\leq 1-\sigma\}\vert\\
        \leq &\left(1+\frac{\alpha(n)}{\vartheta(n)-\alpha(n)}\right).\frac{1}{\vartheta(n)}\vert\{k\leq \vartheta(n): \varphi(w_k-\xi;\varepsilon)\leq 1-\sigma\}\vert.
    \end{align*} Since $\left\{ \frac{\alpha(n)}{\vartheta(n)-\alpha(n)}\right\}$ is bounded and $\left\{ \frac{\vert\{k\leq \vartheta(n): \varphi(w_k-\xi;\varepsilon)\leq 1-\sigma\}\vert}{\vartheta(n)}\right\}$ is convergent to zero as $n\to\infty$, therefore $\lim_{n\to\infty}\frac{1}{\vartheta(n)-\alpha(n)}\vert\{\alpha(n)+1\leq k\leq \vartheta(n): \varphi(w_k-\xi;\varepsilon)\leq 1-\sigma\}\vert=0$. This shows $\mathscr{D}_\alpha^\vartheta(\mathcal{S}^\varphi)-\lim w_k=\xi$. This completes the proof.
\end{proof}

\begin{theorem}
    Let $\{w_k\}$  be a sequence in a PNS $\mathscr{X}$ and $\varrho(n)$ and $\varsigma(n)$ be sequences of  non negative integers such that $\alpha(n)\leq \varrho(n)<\varsigma(n)\leq \vartheta(n)$ for all $n\in\mathbb{N}$. Also, let the sets $\{k\in\mathbb{N}: \alpha(n)<k\leq \varrho(n)\}$ and $\{k\in\mathbb{N}: \varsigma(n)<k\leq \vartheta(n) \}$ be finite. If $\mathscr{D}_\varrho^\varsigma(\mathcal{S}^\varphi)-\lim w_k=\xi$ then $\mathscr{D}_\alpha^\vartheta(\mathcal{S}^\varphi)-\lim w_k=\xi$.
\end{theorem}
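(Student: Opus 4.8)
The plan is to compare the two deferred densities block by block. Fix $\varepsilon>0$ and $\sigma\in(0,1)$, and for $n\in\mathbb{N}$ set
\[
E_\alpha^\vartheta(n)=\bigl\{\alpha(n)+1\le k\le\vartheta(n):\varphi(w_k-\xi;\varepsilon)\le1-\sigma\bigr\},\qquad
E_\varrho^\varsigma(n)=\bigl\{\varrho(n)+1\le k\le\varsigma(n):\varphi(w_k-\xi;\varepsilon)\le1-\sigma\bigr\}.
\]
Since $\alpha(n)\le\varrho(n)<\varsigma(n)\le\vartheta(n)$, the integers in $(\alpha(n),\vartheta(n)]$ are the disjoint union of those in $(\alpha(n),\varrho(n)]$, $(\varrho(n),\varsigma(n)]$ and $(\varsigma(n),\vartheta(n)]$. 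An index counted in $E_\alpha^\vartheta(n)$ therefore lies either in the middle block, where it is also counted in $E_\varrho^\varsigma(n)$, or in one of the two flanking blocks, whence
\[
\bigl|E_\alpha^\vartheta(n)\bigr|\ \le\ \bigl(\varrho(n)-\alpha(n)\bigr)+\bigl|E_\varrho^\varsigma(n)\bigr|+\bigl(\vartheta(n)-\varsigma(n)\bigr).
\]
Dividing by $\vartheta(n)-\alpha(n)$ and using nonnegativity, it is enough to show that each of the three resulting quotients tends to $0$.

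For the middle quotient, observe that $\varsigma(n)-\varrho(n)\le\vartheta(n)-\alpha(n)$ because the three blocks partition the window and the flanks have nonnegative length; hence
\[
\frac{|E_\varrho^\varsigma(n)|}{\vartheta(n)-\alpha(n)}\ \le\ \frac{|E_\varrho^\varsigma(n)|}{\varsigma(n)-\varrho(n)}\ \xrightarrow[\,n\to\infty\,]{}\ 0,
\]
the convergence being exactly the hypothesis $\mathscr{D}_\varrho^\varsigma(\mathcal{S}^\varphi)-\lim w_k=\xi$ (Definition \ref{defi3.2} with $\alpha,\vartheta$ replaced by $\varrho,\varsigma$). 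For the flanking quotients I would invoke the finiteness hypothesis together with $\lim_n\vartheta(n)=\infty$. Write $F_1=\bigcup_n\{k:\alpha(n)<k\le\varrho(n)\}$ and $F_2=\bigcup_n\{k:\varsigma(n)<k\le\vartheta(n)\}$, which are finite by assumption. If the right flank is nonempty for some $n$, then $\vartheta(n)\in F_2$, so $\vartheta(n)\le\max F_2$; as $\vartheta(n)\to\infty$ this occurs for only finitely many $n$, so $\vartheta(n)-\varsigma(n)=0$ — and the third quotient is $0$ — for all large $n$. Likewise, if the left flank is nonempty for some $n$, then $\alpha(n)<\varrho(n)\in F_1$, so $\vartheta(n)-\alpha(n)\ge\vartheta(n)-\max F_1\to\infty$ while $\varrho(n)-\alpha(n)\le|F_1|$ stays bounded; thus the first quotient tends to $0$ along those $n$ for which the left flank is nonempty, and it is identically $0$ for the remaining $n$, so it tends to $0$. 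Therefore $\frac{1}{\vartheta(n)-\alpha(n)}|E_\alpha^\vartheta(n)|\to0$, and since $\varepsilon>0$ and $\sigma\in(0,1)$ were arbitrary this is precisely $\mathscr{D}_\alpha^\vartheta(\mathcal{S}^\varphi)-\lim w_k=\xi$.

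The estimate is purely combinatorial in the index sets, so nothing about the probabilistic norm $\varphi$ is used beyond the definition of deferred statistical convergence. I expect the one point requiring genuine care to be the flanking blocks: one must read ``the two boundary sets are finite'' as finiteness of $F_1$ and $F_2$ and then combine it with $\vartheta(n)\to\infty$ to conclude that $\vartheta(n)-\varsigma(n)$ vanishes eventually and that $\varrho(n)-\alpha(n)$ is negligible beside $\vartheta(n)-\alpha(n)$; this is what makes the two outer blocks drop out and reduces the problem to the already-known behaviour on the middle block.
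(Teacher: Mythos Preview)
Your argument is correct and follows the same three-block decomposition as the paper: split the window $(\alpha(n),\vartheta(n)]$ into $(\alpha(n),\varrho(n)]$, $(\varrho(n),\varsigma(n)]$, and $(\varsigma(n),\vartheta(n)]$, bound the ``bad'' set by the contribution of each block, and show each contribution is negligible. The paper records the post-decomposition estimate as an inequality between three \emph{different} deferred densities $\delta_\alpha^\varrho$, $\delta_\varrho^\varsigma$, $\delta_\varsigma^\vartheta$ and then invokes ``deferred density of a finite set is zero'' for the outer two. Your version keeps the single denominator $\vartheta(n)-\alpha(n)$ throughout and derives the negligibility of the flanks directly from the finiteness of $F_1,F_2$ together with $\vartheta(n)\to\infty$; this neatly avoids the awkwardness that $\varrho(n)-\alpha(n)$ or $\vartheta(n)-\varsigma(n)$ may vanish (so that $\delta_\alpha^\varrho$ and $\delta_\varsigma^\vartheta$ are not a priori well-defined), and it also makes explicit why $\vartheta(n)-\alpha(n)\to\infty$ on the indices with nonempty left flank. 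The substance is the same; your handling of the flanking terms is the more rigorous of the two.
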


\begin{proof}
    Suppose that $\mathscr{D}_\varrho^\varsigma(\mathcal{S}^\varphi)-\lim w_k=\xi$. Then for every $\varepsilon>0$ and $\sigma\in(0,1)$, \begin{equation}\label{eqn3.1}
        \delta_\varrho^\varsigma(\{\varrho(n)+1\leq k\leq \varsigma(n): \varphi(w_k-\xi;\varepsilon)\leq 1-\sigma\})=0.
    \end{equation}
    Given that \begin{equation}\label{eqn3.2}
        \{k\in\mathbb{N}: \alpha(n)<k\leq \varrho(n)\}\ \text{and}\ \{k\in\mathbb{N}: \varsigma(n)<k\leq \vartheta(n) \}
    \end{equation}
    are finite.
    Since \begin{align*}
    &\{\alpha(n)+1\leq k\leq \vartheta(n): \varphi(w_k-\xi;\varepsilon)\leq 1-\sigma\}\\
    =& \{\alpha(n)+1\leq k\leq \varrho(n): \varphi(w_k-\xi;\varepsilon)\leq 1-\sigma\}\\
    \cup & \{\varrho(n)+1\leq k\leq \varsigma(n): \varphi(w_k-\xi;\varepsilon)\leq 1-\sigma\}\\
    \cup &\{\varsigma(n)+1\leq k\leq \vartheta(n): \varphi(w_k-\xi;\varepsilon)\leq 1-\sigma\},
\end{align*}
We have 
\begin{align*}
    &\delta_\alpha^\vartheta(\{\alpha(n)+1\leq k\leq \vartheta(n): \varphi(w_k-\xi;\varepsilon)\leq 1-\sigma\})\\
    \leq & \delta_\alpha^\varrho(\{\alpha(n)+1\leq k\leq \varrho(n): \varphi(w_k-\xi;\varepsilon)\leq 1-\sigma\})\\
    + & \delta_\varrho^\varsigma(\{\varrho(n)+1\leq k\leq \varsigma(n): \varphi(w_k-\xi;\varepsilon)\leq 1-\sigma\})\\
    + &\delta_\varsigma^\vartheta(\{\varsigma(n)+1\leq k\leq \vartheta(n): \varphi(w_k-\xi;\varepsilon)\leq 1-\sigma\}).
\end{align*}
 Deferred density of a finite set being zero,  using the Equations \ref{eqn3.1} and \ref{eqn3.2} we get $\delta_\alpha^\vartheta(\{\alpha(n)+1\leq k\leq \vartheta(n): \varphi(w_k-\xi;\varepsilon)\leq 1-\sigma\})=0$ i.e. $\mathscr{D}_\alpha^\vartheta(\mathcal{S}^\varphi)-\lim w_k=\xi$. This completes the proof.
\end{proof}

\begin{theorem}
    Let $\{w_k\}$  be a sequence in a PNS $\mathscr{X}$ and $\varrho(n)$ and $\varsigma(n)$ be sequences of  non negative integers such that $\alpha(n)\leq \varrho(n)<\varsigma(n)\leq \vartheta(n)$ for all $n\in\mathbb{N}$. If $\lim_{n\to\infty}\frac{\vartheta(n)-\alpha(n)}{\varsigma(n)-\FancyVerbLineautorefname(n)}=\beta>0$ and $\mathscr{D}_\alpha^\vartheta(\mathcal{S}^\varphi)-\lim w_k=\xi$ then $\mathscr{D}_\varrho^\varsigma(\mathcal{S}^\varphi)-\lim w_k=\xi$.
\end{theorem}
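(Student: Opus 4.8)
The plan is to push the hypothesis through the plain inclusion of the relevant index ranges, using only that the ratio of interval lengths $\tfrac{\vartheta(n)-\alpha(n)}{\varsigma(n)-\varrho(n)}$ stays bounded (its limit, the denominator of the ratio being $\varsigma(n)-\varrho(n)$, is the finite number $\beta$). So suppose $\mathscr{D}_\alpha^\vartheta(\mathcal{S}^\varphi)-\lim w_k=\xi$, and fix $\varepsilon>0$ and $\sigma\in(0,1)$. Set
$$\mathscr{A}(\sigma,\varepsilon)=\{\alpha(n)+1\leq k\leq\vartheta(n):\varphi(w_k-\xi;\varepsilon)\leq 1-\sigma\},\qquad \mathscr{B}(\sigma,\varepsilon)=\{\varrho(n)+1\leq k\leq\varsigma(n):\varphi(w_k-\xi;\varepsilon)\leq 1-\sigma\}.$$
Since $\alpha(n)\leq\varrho(n)<\varsigma(n)\leq\vartheta(n)$, the range $[\varrho(n)+1,\varsigma(n)]$ is contained in $[\alpha(n)+1,\vartheta(n)]$, whence $\mathscr{B}(\sigma,\varepsilon)\subseteq\mathscr{A}(\sigma,\varepsilon)$ and in particular $|\mathscr{B}(\sigma,\varepsilon)|\leq|\mathscr{A}(\sigma,\varepsilon)|$.

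Next I would record, for every $n$ (note $\varsigma(n)-\varrho(n)\geq 1$, so no division by zero occurs), the chain
$$\frac{|\mathscr{B}(\sigma,\varepsilon)|}{\varsigma(n)-\varrho(n)}\ \leq\ \frac{|\mathscr{A}(\sigma,\varepsilon)|}{\varsigma(n)-\varrho(n)}\ =\ \frac{|\mathscr{A}(\sigma,\varepsilon)|}{\vartheta(n)-\alpha(n)}\cdot\frac{\vartheta(n)-\alpha(n)}{\varsigma(n)-\varrho(n)}.$$
As $n\to\infty$, the first factor on the right tends to $0$ because $\mathscr{D}_\alpha^\vartheta(\mathcal{S}^\varphi)-\lim w_k=\xi$, and the second factor converges to $\beta$, hence is bounded; therefore the product, and with it $\tfrac{1}{\varsigma(n)-\varrho(n)}|\mathscr{B}(\sigma,\varepsilon)|$, tends to $0$. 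This is exactly $\delta_\varrho^\varsigma(\mathscr{B}(\sigma,\varepsilon))=0$, and since $\varepsilon>0$ and $\sigma\in(0,1)$ were arbitrary it gives $\mathscr{D}_\varrho^\varsigma(\mathcal{S}^\varphi)-\lim w_k=\xi$.

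I do not expect any genuine obstacle: the whole argument is formal, resting on the monotone inclusion of index sets and on rewriting the $(\varrho,\varsigma)$-deferred density of the ``bad'' set as its $(\alpha,\vartheta)$-deferred density multiplied by the length ratio. The two points that deserve a line of care are (i) that only the boundedness of $\tfrac{\vartheta(n)-\alpha(n)}{\varsigma(n)-\varrho(n)}$ is actually used, which is automatic from convergence to the finite limit $\beta$ (the positivity $\beta>0$ is not needed for the inequality itself, but it is what keeps the two deferred schemes comparable and $\delta_\varrho^\varsigma$ the natural normalisation), and (ii) that in order for $\delta_\varrho^\varsigma$ to be meaningful one tacitly needs $\varsigma(n)\to\infty$, which is consistent with the standing assumptions. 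If one wishes to avoid appealing to the existence of limits in the middle of the estimate, the same inequalities may be run with $\limsup_{n\to\infty}$ throughout, reaching the same conclusion.
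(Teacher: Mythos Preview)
Your proof is correct and follows essentially the same route as the paper: the inclusion $\mathscr{B}(\sigma,\varepsilon)\subseteq\mathscr{A}(\sigma,\varepsilon)$ coming from $[\varrho(n)+1,\varsigma(n)]\subseteq[\alpha(n)+1,\vartheta(n)]$, followed by the factorisation
\[
\frac{|\mathscr{B}(\sigma,\varepsilon)|}{\varsigma(n)-\varrho(n)}\leq \frac{\vartheta(n)-\alpha(n)}{\varsigma(n)-\varrho(n)}\cdot\frac{|\mathscr{A}(\sigma,\varepsilon)|}{\vartheta(n)-\alpha(n)},
\]
and then passing to the limit. Your side remarks (that only boundedness of the ratio is used, and that $\varsigma(n)\to\infty$ is implicitly needed) are accurate observations not made explicit in the paper, but they do not change the argument.
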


\begin{proof}
    Suppose that $\mathscr{D}_\alpha^\vartheta(\mathcal{S}^\varphi)-\lim w_k=\xi$. Then for every $\varepsilon>0$ and $\sigma\in(0,1)$, \begin{equation}\label{eqn3.3}
        \lim_{n\to\infty}\frac{1}{\vartheta(n)-\alpha(n)}\vert\{\alpha(n)+1\leq k\leq \vartheta(n): \varphi(w_k-\xi;\varepsilon)\leq 1-\sigma \}\vert=0.
    \end{equation}
    It is clear that $\{\varrho(n)+1\leq k\leq \varsigma(n): \varphi(w_k-\xi;\varepsilon)\leq 1-\sigma \}\subset \{\alpha(n)+1\leq k\leq \vartheta(n): \varphi(w_k-\xi;\varepsilon)\leq 1-\sigma \}$. So, we get \begin{align*}
        &\vert\{\varrho(n)+1\leq k\leq \varsigma(n): \varphi(w_k-\xi;\varepsilon)\leq 1-\sigma \}\vert\\
        \leq &\vert\{\alpha(n)+1\leq k\leq \vartheta(n): \varphi(w_k-\xi;\varepsilon)\leq 1-\sigma \}\vert\\
        \implies &\frac{1}{\varsigma(n)-\varrho(n)}\vert\{\varrho(n)+1\leq k\leq \varsigma(n): \varphi(w_k-\xi;\varepsilon)\leq 1-\sigma \}\vert\\
        \leq & \frac{\vartheta(n)-\alpha(n)}{\varsigma(n)-\varrho(n)}.\frac{1}{\vartheta(n)-\alpha(n)}\vert\{\alpha(n)+1\leq k\leq \vartheta(n): \varphi(w_k-\xi;\varepsilon)\leq 1-\sigma \}\vert.
    \end{align*}
     Letting $n\to\infty$ on both sides and using (\ref{eqn3.3}) and our assumption $\lim_{n\to\infty}\frac{\vartheta(n)-\alpha(n)}{\varsigma(n)-\varrho(n)}=\beta>0$, we get $\lim_{n\to\infty}\frac{1}{\varsigma(n)-\varrho(n)}\vert\{\varrho(n)+1\leq k\leq \varsigma(n): \varphi(w_k-\xi;\varepsilon)\leq 1-\sigma \}\vert=0$. Hence $\mathscr{D}_\varrho^\varsigma(\mathcal{S}^\varphi)-\lim w_k=\xi$. This completes the proof.
\end{proof}

\begin{definition}
      Let $\{w_k\}$  be a sequence in a PNS $\mathscr{X}$. Then $\{w_k\}$ is named to be deferred statistically Cauchy sequence  if for every $\varepsilon>0$ and $\sigma\in(0,1)$ there is $n_0=n_0(\sigma)$ such that $\delta_\alpha^\vartheta(\{\alpha(n)+1\leq k\leq \vartheta(n): \varphi(w_k-w_{n_0};\varepsilon)\leq 1-\sigma \})=0$.
\end{definition}

\begin{theorem}
    Let $\{w_k\}$  be a sequence in a PNS $\mathscr{X}$. If  $\{w_k\}$ is $\mathscr{D}_\alpha^\vartheta(\mathcal{S}^\varphi)$-convergent then it is $\mathscr{D}_\alpha^\vartheta(\mathcal{S}^\varphi)$-Cauchy sequence.
\end{theorem}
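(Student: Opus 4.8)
The plan is to adapt the classical ``statistically convergent $\Rightarrow$ statistically Cauchy'' argument to the present setting; the only genuinely new ingredients are the triangle-type axiom (4) of a probabilistic norm, axiom (3) (to symmetrize), and the continuity of the $t$-norm $\odot$ at $(1,1)$. Assume $\mathscr{D}_\alpha^\vartheta(\mathcal{S}^\varphi)-\lim w_k=\xi$ and fix $\varepsilon>0$ and $\sigma\in(0,1)$. Exactly as in the uniqueness theorem, since $\odot$ is continuous and $1\odot 1=1$, I would first choose $\lambda\in(0,1)$ with $(1-\lambda)\odot(1-\lambda)>1-\sigma$.

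Next, apply the hypothesis with the parameters $\frac{\varepsilon}{2}$ and $\lambda$ in place of $\varepsilon$ and $\sigma$: the set $\mathscr{K}:=\{k\in\mathbb{N}:\varphi(w_k-\xi;\frac{\varepsilon}{2})\le 1-\lambda\}$ has $\delta_\alpha^\vartheta(\mathscr{K})=0$. The crucial point is to extract a single fixed index from this. Since $\delta_\alpha^\vartheta(\mathbb{N})=\lim_{n\to\infty}\frac{\vartheta(n)-\alpha(n)}{\vartheta(n)-\alpha(n)}=1\neq 0$, the set $\mathscr{K}$ must be a proper subset of $\mathbb{N}$, so one may pick $n_0\in\mathbb{N}\setminus\mathscr{K}$ (depending on $\varepsilon$ and $\sigma$), i.e. $\varphi(w_{n_0}-\xi;\frac{\varepsilon}{2})>1-\lambda$; by axiom (3) with $\kappa=-1$ this is the same as $\varphi(\xi-w_{n_0};\frac{\varepsilon}{2})>1-\lambda$.

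Finally, for every $k\in\mathbb{N}\setminus\mathscr{K}$, axiom (4) yields
\[
\varphi(w_k-w_{n_0};\varepsilon)\ \geq\ \varphi\Big(w_k-\xi;\frac{\varepsilon}{2}\Big)\odot\varphi\Big(\xi-w_{n_0};\frac{\varepsilon}{2}\Big)\ >\ (1-\lambda)\odot(1-\lambda)\ >\ 1-\sigma .
\]
Hence $\{k\in\mathbb{N}:\varphi(w_k-w_{n_0};\varepsilon)\le 1-\sigma\}\subseteq\mathscr{K}$, and by monotonicity of the deferred density (the relevant ratios being non-negative, a vanishing upper bound forces the limit to be $0$) we get $\delta_\alpha^\vartheta(\{\alpha(n)+1\le k\le\vartheta(n):\varphi(w_k-w_{n_0};\varepsilon)\le 1-\sigma\})=0$. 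As $\varepsilon>0$ and $\sigma\in(0,1)$ were arbitrary, $\{w_k\}$ is $\mathscr{D}_\alpha^\vartheta(\mathcal{S}^\varphi)$-Cauchy. The only step requiring a little care — and the only place where the argument is not a purely formal manipulation — is the passage from ``the exceptional set $\mathscr{K}$ has deferred density $0$'' to ``there is a concrete index $n_0$ that works''; this is precisely what the observation $\delta_\alpha^\vartheta(\mathbb{N})=1$ delivers, since a set of deferred density zero cannot exhaust $\mathbb{N}$.
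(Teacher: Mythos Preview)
Your proof is correct and follows essentially the same route as the paper: choose $\lambda$ with $(1-\lambda)\odot(1-\lambda)>1-\sigma$, use the fact that the $(\tfrac{\varepsilon}{2},\lambda)$-exceptional set has deferred density $0$ (hence its complement is nonempty) to extract a good index $n_0$, and then apply axiom~(4) to show that the Cauchy-exceptional set is contained in the convergence-exceptional set. The only cosmetic difference is that you argue the inclusion directly (for $k\notin\mathscr{K}$ you verify $\varphi(w_k-w_{n_0};\varepsilon)>1-\sigma$), whereas the paper phrases the same step as a proof by contradiction.
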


\begin{proof}
    Suppose $\{w_k\}$ is $\mathscr{D}_\alpha^\vartheta(\mathcal{S}^\varphi)$-convergent to $\xi$ and $\sigma\in(0,1)$ be given. Choose $\lambda\in(0,1)$ such that $(1-\lambda)\odot(1-\lambda)>1-\sigma$. Then for every $\varepsilon>0$, $\delta_\alpha^\vartheta(\mathscr{A}(\lambda,\varepsilon))=0$ where $$\mathscr{A}(\lambda,\varepsilon)=\{\alpha(n)+1\leq k\leq \vartheta(n): \varphi(w_k-\xi;\frac{\varepsilon}{2})\leq 1-\lambda\}.$$ Then $\delta_\alpha^\vartheta(\mathbb{N}\setminus\mathscr{A}(\lambda,\varepsilon))=1$. So, there is $n_0\in\mathbb{N}\setminus\mathscr{A}(\lambda,\varepsilon)$. Then $$\varphi(w_{n_0}-\xi;\frac{\varepsilon}{2})>1-\lambda$$ holds good. Now, define $\mathscr{B}(\sigma,\varepsilon)=\{\alpha(n)+1\leq k\leq \vartheta(n): \varphi(w_k-w_{n_0};\varepsilon)\leq 1-\sigma\}$. It is sufficient to prove that $\mathscr{B}(\sigma,\varepsilon)\subset\mathscr{A}(\lambda,\varepsilon)$. Let $m\in\mathscr{B}(\sigma,\varepsilon)$. Then $$\varphi(w_m-w_{n_0};\varepsilon)\leq 1-\sigma.$$ We show $\varphi(w_m-\xi;\frac{\varepsilon}{2})\leq 1-\lambda$. If possible,  let $\varphi(w_m-\xi;\frac{\varepsilon}{2})>1-\lambda$. Then, we have 
    \begin{align*}
        1-\sigma&\geq \varphi(w_m-w_{n_0};\varepsilon)\\
        &\geq \varphi(w_{n_0}-\xi;\frac{\varepsilon}{2})\odot \varphi(w_m-\xi;\frac{\varepsilon}{2})\\
        &>(1-\lambda)\odot(1-\lambda)>1-\sigma
    \end{align*}
    which is not possible. Therefore $\varphi(w_m-\xi;\frac{\varepsilon}{2})\leq 1-\lambda$. Hence $m\in\mathscr{A}(\lambda,\varepsilon)$ i.e., $\mathscr{B}(\sigma,\varepsilon)\subset\mathscr{A}(\lambda,\varepsilon)$. This completes the proof.
\end{proof}

\subsection*{Conclusion and future developments}
Here, we have discussed the notion of deferred statistical convergence of sequences as a generalization of statistical, $\lambda$-statistical and lacunary statistical convergence of sequences with respect to probabilistic norm. In Theorem \ref{thm3.3} and \ref{thm3.4} the connection between  convergence method using deferred density and usual convergence method with regards to probabilistic norm has been established. Latter on, one may apply the notion of double sequences and sequences of sets on this convergence method for further developments and give the idea of deferred statistical boundedness to do useful connection with deferred statistical convergent sequences with regard to probabilistic norm.


\begin{thebibliography}{99}\baselineskip=20pt
\footnotesize{
\bibitem{Agnew}
R. P. Agnew, On deferred Ces\`{a}ro mean, \textit{Comm. Ann. Math.} \textbf{33} 1932, 413-421.

\bibitem{Alsina}
C. Alsina, B. Schweizer, A. Sklar,   On the definition of a probabilistic normed space,  \textit{Aequ. Math.}, \textbf{46} (1993), 91-98.

\bibitem{Aghajani}
A. Aghajani, K. Nourouzi,   Convex sets in probabilistic normed spaces, \textit{Chaos Solitons  Fractals}, \textbf{36 (2)} (2008), 322-328.

\bibitem{Alotaibi}
A. Alotaibi,   Generalized statistical convergence in probabilistic normed spaces,  \textit{ Open Math. J.} \textbf{1} (2008), 82-88.

\bibitem{r1}
A. K. Banerjee and R. Mondal, Rough convergence of sequences in a cone metric space. J Anal 27(4), 1179-1188(2019). 

\bibitem{Baliarsingh}
P. Baliarsingh,  L. Nayak, On deferred statistical convergence of order $\beta$ for fuzzy fractional difference sequence and applications, \textit{Soft Comput.}  \textbf{26} (2022), 2625-2634.

\bibitem{Connor}
J. S. Connor,   The statistical and strong p-Ces$\grave{a}$ro convergence of sequences,  \textit{Analysis}, \textbf{8 (1-2)} (1988), 47-63.

\bibitem{Dagadur}
I. Dagadur, \c{S}. Sezgek,   Deferred Ces\`{a}ro mean and deferred statistical convergence of double sequences, \textit{J. Inequal. Spec. Funct.}  \textbf{7(4)} (2016), 118-136.

\bibitem{Debnath2022}
S. Debnath, S. Debnath, C. Choudhury,   On deferred statistical convergence of sequences in neutrosophic normed spaces, \textit{Sahand Commun. Math. Anal.} \textbf{19(4)} (2022), 81-96.

\bibitem{Debnath}
S. Debnath, B. Das,   On deferred statistical convergence of complex uncertain sequences, \textit{J.  Appl. Anal.} \textbf{29(1)} (2023), 105-112.

\bibitem{Et}
M. Et, P. Baliarsingh,  H. \c{S}. Kandemir, M.  K\"{u}\c{c}\"{u}kaslan,  On $\mu$-deferred statistical convergence and strongly deferred summable functions, \textit{Rev.  R. Acad.  Cienc. Exactas F\'{\i}s.  Nat. Ser. A Mat.}, \textbf{115} (2021), 34.

\bibitem{Ercan}
S. Ercan,   On deferred Ces\`{a} ro mean in paranormed spaces, \textit{Korean J.  Math.} \textbf{29(1)} (2021), 169-177.

\bibitem{Fast}
H. Fast,  Sur la convergence statistique,  \textit{ Colloq. Math.} \textbf{ 2 (3-4)}, (1951), 241-244.

\bibitem{Frank}
M. J. Frank,  Probabilistic topological spaces, \textit{J.  Math.  Anal.  Appl.}  \textbf{34 (1)}  (1971), 67-81.

\bibitem{Fridy}
J. A. Fridy,   On statistical convergence, \textit{Analysis}, \textbf{5(4)} (1985), 301-313.

\bibitem{Klement}
E. P. Klement, R. Mesiar, E. Pap, Triangular norms. Position paper I: basic analytical and algebraic properties, \textit{Fuzzy Sets and Systems}, \textbf{143} (2004), 5-26.

\bibitem{Karakus}
S. Karakus,  Statistical convergence on probabilistic normed spaces, \textit{Math. Commun.} \textbf{12 (1)} (2007), 11-23.

\bibitem{Kucukaslan}
 M. K\"{u}\c{c}\"{u}kaslan, M. Y\i lmazt\"{u}rk, On deferred statistical convergence of sequences, \textit{Kyungpook Math. J.} \textbf{56} (2016), 357-366.

 \bibitem{Menger}
K. Menger,  Statistical metrices, \textit{Proc. Nat. Acad. Sci. USA}, \textbf{28 (12)} (1942), 535-537.

\bibitem{Melliani}
S. Melliani, M.  K\"{u}\c{c}\"{u}kaslan, H. Sadiki, L. S. Chadli,   Deferred statistical convergence of sequences in intuitionistic fuzzy normed spaces, \textit{Notes on Intuitionistic Fuzzy Sets}, \textbf{24(3)} (2018), 64-78.




\bibitem{r2}
R. Mondal, Rough Cauchy sequences in a cone metric space, J. Math. Comput. Sci., 12 (2022), Article ID 14.


\bibitem{r3}
R. Mondal and S. Khatun,  Rough convergence of sequences in an S-metric space, Palestine Journal of Mathematics, Vol 13(1)(2024) , 316–322.


\bibitem{r4}
R. Mondal and N. Hossain, Rough Statistical Convergence of DoubleSequences in Probabilistic Normed Spaces, Communications in Mathematics and Applications, Vol. 14, No. 5, pp. 1835–1846, 2023.

\bibitem{r5}
R. Mondal and N. Hossain, Statistical convergence of double sequences in neutrosophic 2-normed spaces, Ural Mathematical Journal, Vol. 10, No. 1, 2024, pp. 85–97.


\bibitem{Nuray}
F. Nuray,   Strongly deferred invariant convergence and deferred invariant statistical convergence, \textit{J. Comput. Sci. Comput. Math.} \textbf{10(1)} (2020), 1-6.

\bibitem{Rahmat}
M. R. S. Rahmat,  Lacunary statistical convergence on probabilistic normed spaces, \textit{Int. J. Open Problems Compt. Math.} \textbf{2(2)} (2009), 281-292.

\bibitem{Steinhaus}
H. Steinhaus,   Sur la convergence ordinaire et la convergence asymptotique, \textit{ Colloq. Math.} \textbf{ 2(1)} (1951),  73-74.

\bibitem{Serstnev}
A. N.  \v{S}erstnev,   Random normed spaces: Problems of completeness, \textit{Kazan. Gos. Univ. Ucen. Zap.},  \textbf{122}  (1962), 3-20.

\bibitem{Schweizer}
B. Schweizer, A. Sklar,   Probabilistic metric spaces, \textit{North Holland, New York-Amsterdam-Oxford}, 1983.

\bibitem{Srivastava}
H. M. Srivastava,  B. B. Jena, S. K. Paikray, Statistical deferred N\"{O}rlund summability and Korovkin-type approximation theorem, \textit{Mathematics}, \textbf{8} (2020), 636.

\bibitem{Ulusu}
U. Ulusu, E. G\"{u}lle,   Deferred Ces\`{a}ro summability and statistical convergence for double sequences of sets, \textit{J.  Intell.  Fuzzy Syst.} 
 \textbf{42(4)} (2022), 4095-4103.

\bibitem{Yilmazturk}
M. Y\i lmazt\"{u}rk, M. K\"{u}\c{c}\"{u}kaslan,   On strongly deferred Ces\`{a}ro summability and deferred statistical convergence of the sequences, \textit{Bitlis Eren Univ. J. Sci. Technol.} \textbf{3(1)}  (2013), 22-25.
}
\end{thebibliography}
\end{document}